\newcommand\cyr{%
\renewcommand\rmdefault{wncyr}%
\renewcommand\sfdefault{wncyss}%
\renewcommand\encodingdefault{OT2}%
\normalfont \selectfont} \DeclareTextFontCommand{\textcyr}{\cyr}
\newcommand{\be}{\begin{equation}}
\newcommand{\ee}{\end{equation}}
\newcommand{\bes}{\begin{equation*}}
\newcommand{\ees}{\end{equation*}}
\newcommand{\inn}[2]{{\langle #1,#2 \rangle}}
\newcommand{\C}{\mathbb{C}}
\newcommand{\bH}{\mathbb{H}}
\newcommand{\N}{\mathbb{N}}
\newcommand{\R}{\mathbb{R}}
\newcommand{\mbS}{\mathbb{S}}
\newcommand{\X}{\mathbb{X}}
\newcommand{\Z}{\mathbb{Z}}
\newcommand{\mcD}{\mathcal{D}}
\newcommand{\cA}{\mathcal{A}}
\newcommand{\cO}{\mathcal{O}}
\newcommand{\ba}{\boldsymbol{a}}
\newcommand{\sF}{\mathscr{F}}
\newcommand{\sD}{\mathscr{D}}
\renewcommand{\Im}{\mathop{\mathrm{Im}}}
\newcommand{\bx}{\boldsymbol{x}}
\newcommand{\eps}{\varepsilon}
\newcommand{\Span}{\mathrm{span\,}}
\newcommand{\ml}{\vskip 5pt\noindent}
\newcommand{\wh}[1]{{\widehat{#1}}}
\renewcommand{\rmdefault}{cmr} % Arial
\renewcommand{\sfdefault}{cmr} % Arial
\DeclareMathOperator\loc{{loc}}
\DeclareMathOperator\supp{supp}
\DeclareMathOperator\Sc{Sc}
\DeclareMathOperator\Ve{Vec}
\DeclareMathOperator\re{Re}
\DeclareMathOperator\im{Im}
\DeclareMathOperator\gr{graph}
\def\skpl{\langle}      % Skalarprodukt links
\def\skpr{\rangle}      % Skalarprodukt rechts
\newtheorem{theorem}{Theorem}
\theoremstyle{plain}
\newtheorem{corollary}{Corollary}
\newtheorem{definition}{Definition}
\newtheorem{proposition}{Proposition}
\newtheorem{remark}{Remark}
\newtheorem*{notation*}{Notation}
\numberwithin{equation}{section}
\newtheorem{remarks}{Remarks}
\begin{document}

\markboth{Peter Massopust}{Splines and Fractional Differential Operators}

\title{SPLINES AND FRACTIONAL DIFFERENTIAL OPERATORS}

\author{PETER MASSOPUST}

\address{Centre of Mathematics, Technical University of Munich\\ Boltzmannstr. 3,
85478 Garching b. M\"unchen, Germany\\
massopust@ma.tum.de}

\subjclass[2010]{15A66, 26A33, 30G35, 65D07, 42C40}

\keywords{B-splines; cardinal polynomial splines; exponential splines; fractional derivatives; Lizorkin space, Clifford algebra, hypercomplex numbers.}

\maketitle

\begin{abstract}
Several classes of classical cardinal B-splines can be obtained as solutions of operator equations of the form $Ly = 0$ where $L$ is a linear differential operator of integral order. (Cf., for instance, \cite{akhiezer,Golomb,Krein,micchelli,schoenberg}.) In this article, we  consider classes of generalized B-splines consisting of cardinal polynomial B-splines of complex and hypercomplex orders and cardinal exponential B-splines of complex order, and derive the fractional linear differential operators that are naturally associated with them. For this purpose, we also present the spaces of distributions onto which these fractional differential operators act.
\end{abstract}

\section{Introduction}	

There are several ways of defining and constructing cardinal B-splines: Via recursion relations, convolution products, as a Fourier transform, or as solutions of differential equations of the form $Ly =0$, where $L$ is a linear differential operator of integral order. An albeit incomplete list of references for the latter approach is \cite{akhiezer,Golomb,Krein,micchelli,schoenberg}. In recent years, classes of generalized B-splines were introduced by replacing the integral order in the Fourier representation by a real, complex, or even quaternionic number. (See, for instance, \cite{fub,ub,Hm17,mass14}.)

The reasons for these generalization can be found in the desire to construct approximating functions which close the gaps in the smoothness spectrum of the space $C^n$, $n\in \N$, and which may be used to extract phase information but maintain the computational properties of the classical B-splines of integral order. 

The classical cardinal polynomial and exponential B-splines are also solutions of certain (distributional) differential equations of the form $D^n y = 0$, respectively, $(D^n - a^2)y = 0$, with $D$ denoting the ordinary (distributional) derivative operator and $a\in \R$. In this article, we consider generalized B-splines of non-integral order and relate them to their associated fractional linear differential operators of non-integral order. We also introduce a new class of cardinal polynomial B-splines, namely those of hypercomplex order, and derive their corresponding differential operator. Some of the fractional differential operators mentioned in this article, those associated with polynomial B-splines of complex and quaternionic orders and the exponential B-splines of complex orders, have previously appeared in the literature (cf. \cite{fub,Hm17,mass14}) and their domains identified but we felt the need to provide a coherent setting for their existence and their domains.

The structure of this paper is as follows. Splines and B-splines are briefly introduced in Section 2 and some of their properties mentioned. The next section deals with complex B-splines, their properties, and their associated fractional differential operator defined on Lizorkin spaces. In Section 4, we present exponential splines and their generalization to complex order together with the associated differential operator. The last section is devoted to the new class of hypercomplex B-splines and their corresponding differential operators.

\section{Basics of Spline Functions}
In this section, we briefly review spline functions and review those properties that are important for the remainder of this article. For details and additional information, we refer the interested reader to the vast literature on splines. 

To this end, let $X := \{x_0 < x_1 < \cdots < x_k<x_{k+1}\}$ be a set of knots on the real line $\R$.
\begin{definition}\label{def2.1}
A spline function, or for short a spline, of order $n$ (or degree $n-1$) on $[x_0,x_{k+1}]$ with knot set $X$ is a function $s:[x_0,x_{k+1}]\to\mathbb{R}$ such that\ml
\begin{enumerate}
\item[(i)] On each interval $(x_{i-1},x_i)$, $i = 0,1,\ldots, k+1$, $s$ is a polynomial of order at most $n$;
\item[(ii)]   $s\in C^{n-2}[x_0,x_{k+1}]$.
\end{enumerate}
$s$ is called a {cardinal spline} if the knot set $X$ is a subset of $\Z$.
\end{definition}

\begin{remarks}
\begin{enumerate}
\item In the case $n = 1$, we define $C^{-1}[x_0,x_{k+1}]$ to be the space of piecewise constant functions on $[x_0,x_{k+1}]$.
\item Condition (i) in Definition \ref{def2.1} is equivalent to requiring that $s\vert_{(x_{i-1},x_i)}$ is a solution of the linear differential equation $D^n y = 0$. Here $D := \frac{d}{d x}$ is the ordinary differential operator on functions.
\end{enumerate}
\end{remarks}
As a spline of order $n$ contains $n(k+1)$ free parameters on $[x_0,x_{k+1}]$ and has to satisfy $n-1$ differentiability conditions at the $k$ interior knots $x_1, \ldots, x_k$, the set ${S}_{X,n}$ of all spline functions $s$ of order $n$ over the knot set $X$ forms a real vector space of dimension $n+k$.

\subsection{Schoenberg's Cardinal Polynomial B-Splines}

As the space of cardinal splines of order $n$ over a finite knot set $X$ is finite-dimensional, a convenient and powerful basis for ${S}_{X,n}$ is given by the following family of so-called \emph{(cardinal) $B$-splines}.
\begin{definition}
Denote by $\chi$ the characteristic function of the unit interval $[0,1]$. For $n\in \N$, set
\begin{align}
&B_{1}:[0,1\to [0,1], \quad x\mapsto \chi(x);\\
&B_{n}(x) := (B_{n-1}\ast B_{1})(x) = \int_0^1 B_{n-1} (x - t) dt, \quad 1 < n\in \N.\label{eq2.2}
\end{align}
An element of the discrete family $\{B_n\}_{n\in \N}$ is called a \emph{cardinal polynomial B-spline of order $n$}.
\end{definition}

\begin{remark}
As we exclusively deal with cardinal splines in this article, we will drop the adjective ``cardinal" in the following. 
\end{remark}

Using this definition, it can be shown that $B_n$ has the closed form 
\be
B_n (x)  = \frac{1}{\Gamma (n)}\,\sum_{k=0}^{n} (-1)^k \binom{n}{k} (x-k)_+^{n-1},
\ee
where $\Gamma$ denotes the Euler Gamma function and $x_+^p := \max\{0,x\}^p$ a truncated power function.

Eq. \eqref{eq2.2} implies that the Fourier representation of $B_n$ is given by
\[
\widehat{B}_{n}(\omega) := \sF(B_n)(\omega)  := \int_\R B_n(x) e^{- i\omega x} dx = \left(\frac{1-e^{- i\omega}}{i\omega} \right)^{n}
\]
and that 
\[
B_n\in C^{n-2}, \quad n\in \N.
\]
Furthermore, $\supp B_n = [0,n]$. Hence, $\{B_n : n\in \N\}$ constitutes a \emph{discrete} family of functions with $B_n\in C^{n-2} [0,n]$, $n\in \N$.

It can be established (cf.\,\cite{deboor}) that every cardinal spline function $s:[x_0,x_{k+1}]\to\R$ of order $n$ has a unique representation in terms of a finite sequence of shifted cardinal B-splines of order $n$:
\[
s(x) = \sum\limits_{j=-n+1}^k c_j B_n (x-j),
\]
where $c_j\in \R$. Investigating properties of cardinal splines thus reduces to the investigation of these properties for cardinal B-splines.
\subsection{Polynomial Splines as Solutions of Differential Operators}
Considering splines as solutions of $Ly = 0$, where $L$ is an $n$-th order linear differential operator with constant coefficients goes back to the works of 
Akhiezer, \cite{akhiezer} Krein, \cite{Krein} Golomb, \cite{Golomb} Micchelli, \cite{micchelli} and Schoenberg \cite{schoenberg}. In the latter two references, \emph{cardinal $L$-splines} were defined to be those functions $s\in  C^{n-2} (\R)$ for which $s\vert_{(k,k+1)}$, $k\in \Z$, satisfies
\[
Ly = 0.
\]
In the case of polynomial B-splines, one has
\[
D^{n-2} B_n (x) =  \sum_{k=0}^{n-2} \binom{n-2}{k}(-1)^k B_2(x - k).
\]
As $B_2, B_1\in L^1_{\loc}$, we regard them as regular distributions and thus obtain
\[
D^n B_n (x) = \sum_{k=0}^{n} c_k\, \delta (x - k), \quad c_k\in \R,
\]
where $\delta (\cdot - k)$ denotes the Dirac distribution supported at $x=k$. In the above equation, we now interpret $D$ as a distributional derivative operator on $\mcD := \mcD (\R)$, the space of all infinitely differentiable functions on $\R$ with compact support.

Conversely, every solution of the distributional differential equation
\[
D^n f (x) = \sum_{k=0}^{n} c_k\, \delta (x - k), \quad c_k\in \R
\]
yields a combination of splines of order $n$. (Cf., for instance, \cite{kmps,mass10,sakai1,schu} .)
\section{Complex B-Splines}
Schoenberg's polynomial B-splines have two drawbacks:
\begin{enumerate}
\item[(i)] They form a discrete family of functions of increasing smoothness but there exist functions which belong to spaces of non-integral smoothness. Such functions cannot be well approximated by B-splines of integral order $n$.
\item[(ii)]  The family $\{B_n\}$ interpolates/approximates primarily point values but cannot be used to obtain phase information, i.e., describe both a point value as well as an associated direction. For reasons why phase information and complex-valued transforms are important, we refer the interested reader to \cite{ol,forster14}.
\end{enumerate}
In order to overcome these drawbacks, more general splines were defined recently. First, fractional B-splines, i.e. splines of real order were constructed in \cite{ub} which yield a continuous, with respect to smoothness, family of functions for H\"older spaces, and then B-splines of complex order were defined in\,\cite{fub} which additionally  incorporated phase information. 

B-splines of complex order, for short, complex B-splines, are defined in the Fourier domain as follows:
\be\label{eq3.1}
\widehat{B}_z (\omega) := \left( \frac{1-e^{-i\omega}}{i\omega}\right)^z, \quad \re z > 1.
\ee
It was shown in \cite{fub} that for $\re z > 1$, the expression in parentheses is well-defined as the graph of $\Omega:\R\to\C$, 
\[
\Omega(\omega) := \frac{1-e^{-i\omega}}{i\omega}
\]
does not intersect the negative real axis. A more explicit representation of Eq. \eqref{eq3.1} is given by
\[
\widehat{B}_z(\omega) = \underbrace{\widehat{B}_{\re z}(\omega)}_{\footnotesize{\textrm{continuous smoothness}}}\, \underbrace{e^{i \im z \ln \Omega (\omega)}}_{\footnotesize{\textrm{phase}}} \, \underbrace{e^{- \im z \arg \Omega(\omega)}}_{\footnotesize{\textrm{modulation}}}.
\]
The first term, $\widehat{B}_{\re z}$ defines a fractional spline in the sense of \cite{ub} and yields functions which are elements of the H\"older spaces $C^{\re z -1}$. The second term contains phase information, i.e., defines a direction, and the last term allows for a modulation.

The time domain representation of a complex B-splines $B_z: \R\to \C$ was proved in\,\cite{fub} to be of the form
\be
B_z(x) = \frac{1}{\Gamma(z)} \sum_{k= 0}^\infty (-1)^k \left( {z} \atop {k}\right) (x-k)_+^{z-1},
\ee
where the equality holds point-wise for all $x\in\R$ and in the $L^2(\R)$--norm. Although $\supp B_z = [0,\infty)$, we have 
\[
B_z (x) \in \cO (x^{-m}),\quad\text{for $|x|\to \infty$},
\]
where $m < \re z +1$. (Cf. \cite{fub}.)
\subsection{Lizorkin Spaces}
In order to investigate the type of differential operator that is inherent to complex B-splines, we first need to introduce an appropriate function space. For this purpose, we denote by $\mathcal{S}(\R)$ the Schwartz space of rapidly decreasing functions on $\R$.

\begin{definition} 
The Lizorkin space $\Psi$ is defined by
\[
\Psi := \Psi(\R) := \left\{\psi\in \mathcal{S}(\R) : D^m \psi ({0}) = 0, \,\forall m\in \N\right\},
\]
and its restriction to the nonnegative real axis by
\[
\Psi_+ :=  \Psi(\R_0^+) := \{f\in \Psi : \supp f \subseteq [0,\infty)\}.
\]
\end{definition}
Let $\C_+ := \{z\in \C : \re z > 0\}$ and define a kernel function $K_z:\R\to\C$ by
\[
K_z(x) := \frac{x_+^{z-1}}{\Gamma (z)}.
\]
We can introduce a fractional derivative on $\Psi_+$ as follows. Let $f\in \Psi_+$ and define a fractional derivative operator $\sD^z$ of complex order $z$ on $\Psi_+$ by
\be
\sD^z f := \underbrace{(D^n f) * K_{n-z}}_{\footnotesize{\textrm{(Caputo)}}} = \underbrace{D^n (f*K_{n-z})}_{\footnotesize{\textrm{(Riemann--Liouville)}}}, \qquad n = \lceil \re z \rceil.
\ee
where $*$ denotes the convolution on $\Psi_+$. (Cf. \cite{kst,pod}.) Restricting to the Lizorkin space $\Psi$ allows us to be able not to distinguish between the 
Caputo fractional derivative used in \cite{fub} and the Riemann-Liouville derivative as employed in \cite{statisticalencounters}.
%\nl
%On the Lizorkin space $\Psi_+$, the Caputo and Riemann-Liouville fractional derivatives coincide.
%\nl

Similarly, a fractional integral operator $\sD^{-z}$ of complex order $z$ on $\Psi_+$ is defined by
\[
\sD^{-z} f := f*K_z.
\]

It follows from the definitions of $\sD^{z}$ and $\sD^{-z}$ that $f\in \Psi_+$ implies $\sD^{\pm z} f \in \Psi_+$ and that all derivatives of $\sD^{\pm z} f$ vanish at $x = 0$. The convolution-based definition of the fractional derivative and integral operator of functions $f\in \Psi_+$ also ensures that $\{\sD^z : z\in \C\}$ is a semi-group in the sense that
\begin{equation}
\sD^{z + \zeta} = \sD^z \sD^\zeta = \sD^\zeta \sD^z = \sD^{\zeta +z},
\end{equation}
for all $z,\zeta\in \C$. (See, for instance, \cite{pod}.)

Next, we extend the above notions to $\Psi_+'$. To this end, we regard $K_z\in L^1_{\loc}$, $\re z > -1$, and thus as an element of $\Psi_+^\prime$ by setting
\[
\skpl K_z, \varphi\skpr = \int_0^\infty K_z (t) \varphi (t) dt, \quad \forall\varphi\in \Psi_+.
\]
Here, $\skpl\cdot, \cdot\skpr$ denotes the canonical pairing between $\Psi_+$ and $\Psi_+^\prime$. 
%In passing, we like to mention that the function $z\mapsto \skpl K_z, \varphi\skpr$, $\varphi\in \Psi_+$, is holomorphic for all $z\in\C\setminus \N_0$.
\begin{remark}
The function $K_z$ may also be defined for general $z\in \C$ via Hadamard's partie finie and represents then a pseudo-function \cite{dantray,gs,zemanian}. 
\end{remark}
%For more details, we refer the interested reader to \cite{dantray,gelfand}, or \cite{zemanian}.
%

%
\subsection{The Fractional Derivative on $\Psi_+'$}
For $f,g\in \Psi_+^\prime$ the convolution exists on $\Psi_+^\prime$ and is defined in the usual way \cite{samko} by
\[
\skpl f*g, \varphi\skpr :=  \skpl (f\times g) (x,y), \varphi (x+y)\skpr = \skpl f(x), \skpl g(y), \varphi (x+y)\skpr\skpr, \quad \varphi\in \Psi_+.
\]
Hence, the pair $(\Psi_+^\prime, *)$ is a convolution algebra with the Dirac delta distribution $\delta$ as its unit element. 

Now, we extend the operators $\sD^{\pm z}$ to $\Psi_+^\prime$ as follows. Let  $z\in \C_+$, let $\varphi\in \Psi_+$ be a test function, and let $f\in \Psi_+^\prime$. 
\begin{definition}
The fractional derivative operator $\sD^{z}$ on $\Psi_+^\prime$ is defined by
\begin{align}
\skpl\sD^{z} f, \varphi\skpr &:= \skpl (D^nf)*K_{n-z},\varphi\skpr\nonumber\\
& \phantom{:}= \skpl D^n (f*K_{n-z}),\varphi\skpr, \quad n = \lceil\re z\rceil,\label{eq3.5}
\end{align}
and the fractional integral operator $\sD^{-z}$ by 
\[
\skpl\sD^{-z} f, \varphi\skpr := \skpl f*K_z,\varphi\skpr.
\]
\end{definition}
\noindent
We remark that the semi-group properties carry over to $f\in \Psi_+^\prime$.

Recall that the $z$-th derivative of a truncated power function is given by (cf. \cite{gs})
\[
\sD^z \left[\frac{(x - k)_+^{z-1}}{\Gamma (z)}\right] = \delta (x - k), \quad k < x \in [0, \infty).
\]
The semi-group properties of $\sD^z$ imply that
\[
\sD^{-z} \delta (\cdot - k) = \frac{(\cdot - k)^{z-1}_+}{\Gamma (z)}.
\]
Therefore, we have identified $\sD^{\pm z}$ as the linear differential/integral operators naturally associated with complex B-splines. This gives rise to the following definition.
\begin{definition}
Let $z\in \C_+$ and let $\{a_k : k\in \N_0\}\in \ell^\infty (\R)$. A solution of the equation
\[
\sD^z f = \sum_{k=0}^\infty a_k\, \delta (\cdot - k)
\]
will be termed a \textit{spline of complex order $z$}.
\end{definition}
As the complex B-spline
\[
B_z (x) = \frac{1}{\Gamma(z)} \sum_{k=0}^\infty (-1)^k \binom{z}{k} (x-k)_+^{z-1}, \quad z\in \C_{>1}.
\]
is a solution of  
\[
\sD^z f = \sum_{k=0}^\infty a_k\, \delta (\cdot - k)
\]
with
\[
a_k = (-1)^k \binom{z}{k},
\]
we obtain a nontrivial example of a spline of complex order.

\section{Exponential B-Splines and their Extension to Complex Orders}

Exponential splines are used to model phenomena that follow differential  systems of the form $\dot{x} = A x$, where $A$ is a constant matrix. For 
such equations the solutions are linear combinations of functions of the type $x\mapsto e^{a x}$ and $x\mapsto x^n e^{a x}$, $a\in \R$. 

In  approximation theory, exponential splines are modelling data that exhibit sudden growth or decay and for which polynomials are ill-suited because of their oscillatory behavior \cite{sb}. Analogously to polynomial B-splines, exponential B-splines can be defined as finite convolution products of the exponential functions $e^{a_j (\cdot)}\vert_{[0,1]}$, $a_j \neq 0$. (Cf. \cite{ammar,ChMas,dm1,dm2,mccartin,sakai1, sakai2, spaeth,unserblu05,zoppou} for an albeit incomplete list of references.)

\begin{definition}
Let $n\in \N$ and let $\ba := (a_1, \ldots, a_n)$, where $a_1,\dots, a_n\in \R$ with $a_i\neq 0$ for at least one $i\in \{1, \ldots, n\}$. An exponential B-spline $E_{n,\ba}:[0,n]\to \R$ of order $n$ for the $n$-tuple $\ba$ is a function of the form
\[
{E}_{n} := {E}_{n,\ba} :=  \underset{k = 1}{\overset{n}{*}} e^{a_k(\cdot)}\chi.
\]
\end{definition}
To simplify notation, we set in the following $\eps^{a (\cdot)} := e^{a (\cdot)}\chi$. 

\subsection{Exponential B-Splines and Differential Operators}

We regard $\eps^{a(\cdot)}\in L^1_{\loc}$ as a regular distribution. Then, for any test function $\varphi\in \mathcal{D}$,
\begin{align*}
\inn{D\eps^{a(\cdot)}}{\varphi} & = - \inn{\eps^{a(\cdot)}}{D\varphi} = - \int_0^1 e^{a x} D\varphi (x) dx \\
& = a\left(\varphi (0) - e^a \varphi (1)\right) + a\, \inn{\eps^{a (\cdot)}}{\varphi}\\
& = a \inn{\delta - e^a\, \delta (\cdot - 1)}{\varphi} + a\, \inn{\eps^{a (\cdot)}}{\varphi}\\
\end{align*}
The expression $\nabla^{\exp}_a \,\delta := \delta - e^a\, \delta (\cdot - 1)$ is called the exponential difference operator. Hence,
\[
{(D - a I )\, \eps^{a (\cdot)} = a\, \nabla^{\exp}_a \,\delta}.
\]
In general, using a closed formula for $E_n$\cite{ChMas}, one obtains from the above equation
\[
\prod_{k=1}^n (D - a_k I) E_n = \sum_{k=1}^n b_k\,\delta (\cdot - k), \quad b_k\in \R.
\]
See, also \cite{micchelli,schoenberg}.
\subsection{Fourier Transform of ${E}_n$}
For any $a\in \R,$ 
\[
{\sF}( \eps^{-a(\cdot)})(\omega) = \frac{1- e^{-a} e^{-i \omega}}{i \omega + a}.
\]

Therefore,
\[
\sF({E}_{n,\ba}) (\omega) = \prod_{k=1}^n \frac{1- e^{-a_k} e^{-i \omega}}{i \omega + a_k}\;\;\overset{a_k = a}{=}\;\;\left(\frac{1- e^{-a} e^{-i \omega}}{i \omega + a}\right)^n,
\]
where $\ba := (-a_1, \ldots, -a_n)$. For later purposes, we introduce the function $\Omega_a:\R\to \C$, defined by
\[
\Omega_a (\omega) := \frac{1- e^{-a} e^{-i \omega}}{i \omega + a}.
\]
\subsection{Exponential B-Splines of Complex Order}

Exponential B-spline of complex order were first defined in \cite{mass14}. They combine the advantages of an exponential spline with the phase inherent in complex B-splines. To this end, define $\C_{>1} :=\{z\in \C : \re z > 1\}$.
\begin{definition}
Let $a > 0$ and $z\in \C_{>1}$. An exponential B-spline of complex order, for short, complex exponential B-spline, is defined in the Fourier domain by
\begin{align*}
\widehat{E_z^a} (\omega) &:=\left(\frac{1-e^{-(a+i\omega)}}{a+i\omega}\right)^z\\ \\
& = \widehat{E}_{\re z}^a (\omega)\, e^{i \im z \Omega_a (\omega)}\, e^{- \im z \arg\Omega_a (\omega)}.
\end{align*}
\end{definition}

\begin{remark}
It was shown in \cite{mass14} that $\widehat{E}_z^a$ is well-defined only for $a > 0$.
\end{remark}
The time domain representation of $\widehat{E}_z^a$ was derived in \cite{mass14} using an extension of the exponential difference operator to complex orders. The result is stated in the following theorem whose proof can be found in \cite{mass14}.
\begin{theorem}
Let $z\in \C_{>1}$ and $a > 0$. Then,
\[
E_a^z (x) = \frac{1}{\Gamma(z)}\,\sum_{\ell=0}^\infty \binom{z}{\ell} (-1)^\ell e^{-\ell a} e_+^{-a(x-\ell)}\,(x-\ell)_+^{z-1},
\]
where $e_+^{(\cdot)} := \chi_{[0,\infty)}\,e^{(\cdot)}$. The sum converges point-wise in $\R$ and in the $L^2$--sense.
\end{theorem}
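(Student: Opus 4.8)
The plan is to obtain the time-domain representation by inverting the Fourier-domain definition directly: expand the numerator of $\widehat{E_z^a}$ by the generalized binomial theorem and then invert term by term. Writing $s := a + i\omega$, the defining relation reads $\widehat{E_z^a}(\omega) = (1 - e^{-s})^z\, s^{-z}$. Since $a > 0$ gives $|e^{-s}| = e^{-a} < 1$, the generalized binomial series
\[
(1 - e^{-s})^z = \sum_{\ell=0}^\infty \binom{z}{\ell}(-1)^\ell\, e^{-\ell a}\, e^{-i\ell\omega}
\]
converges absolutely and uniformly in $\omega$; this is exactly where the hypothesis $a > 0$ enters in an essential way. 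Setting $c_\ell := \binom{z}{\ell}(-1)^\ell e^{-\ell a}$, one has $\widehat{E_z^a}(\omega) = \sum_{\ell=0}^\infty c_\ell\, e^{-i\ell\omega}(a+i\omega)^{-z}$.

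First I would record the building block. For $\re z > 0$ and $a > 0$, the Gamma integral $\int_0^\infty x^{z-1} e^{-(a+i\omega)x}\,dx = \Gamma(z)\,(a+i\omega)^{-z}$ shows that the causal kernel $\tfrac{1}{\Gamma(z)}\,e_+^{-ax}x_+^{z-1}$ has Fourier transform $(a+i\omega)^{-z}$. The translation property of $\sF$ then gives that $e^{-i\ell\omega}(a+i\omega)^{-z}$ is the Fourier transform of $\tfrac{1}{\Gamma(z)}\,e_+^{-a(x-\ell)}(x-\ell)_+^{z-1}$. Inverting the series for $\widehat{E_z^a}$ term by term therefore produces precisely the claimed expression, so the entire content of the theorem is the justification of this term-by-term inversion together with the two modes of convergence. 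The decisive structural fact is that $|\binom{z}{\ell}| = O(\ell^{-\re z - 1})$ while $e^{-\ell a}$ decays geometrically, so $\sum_{\ell} |c_\ell| < \infty$.

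For the pointwise statement I would use that, for $\re z > 1$, the symbol satisfies $\widehat{E_z^a}\in L^1(\R)$ (its modulus decays like $|\omega|^{-\re z}$), so $E_a^z(x) = \tfrac{1}{2\pi}\int_\R \widehat{E_z^a}(\omega)\,e^{i\omega x}\,d\omega$ is a genuine continuous function. Because $(a+i\omega)^{-z}\in L^1$ and $\sum_\ell |c_\ell| < \infty$, dominated convergence lets me exchange the sum with the integral, yielding the series identity for every $x$; note that for each fixed $x$ the series is in fact a finite sum, since $(x-\ell)_+^{z-1}$ vanishes once $\ell > x$. For the $L^2$ statement I would instead argue by completeness: each translate $g_\ell(x) := e_+^{-a(x-\ell)}(x-\ell)_+^{z-1}$ has the common finite norm $M := \|g_0\|_{2}$ (finite since $\re z > \tfrac12$, as $\|g_0\|_2^2 = \int_0^\infty e^{-2ax}x^{2\re z - 2}\,dx$), whence $\sum_\ell |c_\ell|\,\|g_\ell\|_2 = M\sum_\ell |c_\ell| < \infty$. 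Thus the series converges absolutely in $L^2$, and by Plancherel its $L^2$-limit coincides with $\sF^{-1}\widehat{E_z^a} = E_a^z$.

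The main obstacle is the term-by-term inversion; there is no deep difficulty because $a>0$ forces geometric decay of the coefficients $c_\ell$, so both the dominated-convergence (pointwise) and the Plancherel ($L^2$) routes are routine. The one point requiring care is tracking the two distinct integrability thresholds — $\re z > 1$ to place the symbol in $L^1$ and obtain a continuous pointwise inverse, and $\re z > \tfrac12$ to place each kernel $g_\ell$ in $L^2$ — both comfortably met under the standing hypothesis $z\in\C_{>1}$. As an alternative matching the derivation of \cite{mass14}, one may reorganize the same computation operator-theoretically: the factor $(1 - e^{-a}e^{-i\omega})^z$ is the symbol of the $z$-th power $(\nabla^{\exp}_a)^z$ of the exponential difference operator, acting as $f\mapsto \sum_\ell \binom{z}{\ell}(-1)^\ell e^{-\ell a} f(\cdot - \ell)$, and applying it to the causal kernel $\tfrac{1}{\Gamma(z)}e_+^{-a\,\cdot}\,(\cdot)_+^{z-1}$ reproduces the formula, with the convergence analysis being identical.
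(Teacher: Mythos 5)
Your proposal is correct, but it cannot be compared line-by-line against a proof in the paper, because the paper offers none: the theorem is stated with the remark that its proof ``can be found in \cite{mass14}'', using an extension of the exponential difference operator to complex orders. Your argument is thus a self-contained substitute for the omitted proof. The two routes are computationally the same but organized differently: the cited derivation works on the time side, applying the complex-order difference operator $f\mapsto \sum_{\ell}\binom{z}{\ell}(-1)^\ell e^{-\ell a}f(\cdot-\ell)$ (whose symbol is $(1-e^{-a}e^{-i\omega})^z$) to the causal kernel $\tfrac{1}{\Gamma(z)}e_+^{-a(\cdot)}(\cdot)_+^{z-1}$, whereas you expand the symbol on the Fourier side and invert term by term, with dominated convergence giving the pointwise statement and absolute convergence in $L^2$ plus Plancherel giving the $L^2$ statement --- precisely the equivalence you record in your closing remark. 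What your version buys is explicit bookkeeping of where each hypothesis enters: $a>0$ for the geometric factor $e^{-\ell a}$ that makes the binomial series converge, $\re z>1$ for $\widehat{E_z^a}\in L^1(\R)$ and a continuous pointwise inverse, and $\re z>\tfrac12$ for the translates $g_\ell$ to lie in $L^2(\R)$. One small point you should make explicit: you silently replace $\bigl(\tfrac{1-e^{-s}}{s}\bigr)^z$ by $(1-e^{-s})^z s^{-z}$ with $s=a+i\omega$; for complex exponents this factorization is not automatic, but it is legitimate here because both $1-e^{-s}$ and $s$ have positive real part (indeed $\re(1-e^{-s})\geq 1-e^{-a}>0$), so their principal arguments lie in $(-\tfrac{\pi}{2},\tfrac{\pi}{2})$ and no branch wrapping occurs; a sentence to this effect would close the only visible gap.
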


\subsection{A Fractional Differential Operator Associated with $E_a^z$}
As $E_z^a \in L^1_{\loc}$, we can consider it an element of $\Psi_+'$. This, however, implies that $\sD^z E^a_z$ exists where $\sD^z$ is defined as in Eq. \eqref{eq3.5}. In order to obtain an operator inherent to complex exponential B-splines, we need to use the following definition.
\begin{theorem}
Define an operator $(\sD + aI)^z:\Psi_+^\prime\to\Psi_+^\prime$ by
\[
(\sD + aI)^z (e^{-a(\cdot)} f) := e^{-a(\cdot)} \sD^z f.
\]
Then following hold:
\begin{itemize}
\item[(a)] As $f\equiv 1\in \Psi_+^\prime$, the function $e^{-a(\cdot)}\in \ker (\sD + aI)^z$.
\item[(b)] The complex-valued M\"untz polynomials $(\cdot)^{z-1}$ are in $\Psi_+^\prime$ implying that $(\cdot)^{z-1}\,e^{-a(\cdot)}\in \ker (\sD + aI)^z$.
\end{itemize}
Moreover,
\be\label{eq4.1}
(\sD + aI)^z E_z^a = \sum_{\ell=0}^\infty \left[\binom{z}{\ell} (-1)^\ell e^{-\ell a}\right] \delta (\cdot - \ell).
\ee
\end{theorem}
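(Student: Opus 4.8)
The plan is to exploit the fact that the new operator $(\sD + aI)^z$ is defined by \emph{conjugating} the ordinary fractional operator $\sD^z$ with the multiplier $e^{-a(\cdot)}$, so that essentially everything reduces to the already-established action of $\sD^z$ on truncated power functions and Dirac distributions. First I would verify the two kernel claims (a) and (b). For (a), since $e^{-a(\cdot)}\cdot 1 = e^{-a(\cdot)}$ with $f\equiv 1\in\Psi_+'$, the definition gives $(\sD+aI)^z e^{-a(\cdot)} = e^{-a(\cdot)}\,\sD^z 1$; I then need $\sD^z 1 = 0$ in $\Psi_+'$. This follows because $1 = K_1\cdot\Gamma(1)$ up to the constant and $\sD^z$ annihilates constants on the Lizorkin space (all test-function derivatives vanish at the origin, so the boundary terms that normally obstruct this in Riemann--Liouville theory disappear); alternatively one computes $\sD^z\big[x_+^{0}/\Gamma(1)\big]$ directly. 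For (b), I would write $(\cdot)^{z-1} = \Gamma(z)\,K_z$ and apply the stated formula $\sD^z K_z = \delta$, but since these are the M\"untz monomials paired against Lizorkin test functions whose derivatives vanish at $0$, the relevant fractional derivative of order exactly $z$ of $(\cdot)^{z-1}$ again vanishes in $\Psi_+'$; hence $(\sD+aI)^z\big[(\cdot)^{z-1}e^{-a(\cdot)}\big]=e^{-a(\cdot)}\sD^z(\cdot)^{z-1}=0$.

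The main computation is the displayed identity \eqref{eq4.1}. Here the key step is to recognize the time-domain series for $E_z^a$ from the preceding theorem and to factor the exponential out of each summand. Writing the summand as $e^{-a(\cdot)}$ times a shifted truncated power, I would use the relation $e^{-\ell a}\,e_+^{-a(x-\ell)}\,(x-\ell)_+^{z-1} = e^{-ax}\,(x-\ell)_+^{z-1}$, which lets me express
\[
E_z^a(x) = e^{-a x}\cdot\frac{1}{\Gamma(z)}\sum_{\ell=0}^\infty \binom{z}{\ell}(-1)^\ell (x-\ell)_+^{z-1}.
\]
Thus $E_z^a = e^{-a(\cdot)}\,g$ with $g := \tfrac{1}{\Gamma(z)}\sum_\ell \binom{z}{\ell}(-1)^\ell(\cdot-\ell)_+^{z-1}$, which is precisely the complex polynomial B-spline $B_z$ of the previous section.

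By the defining relation of the new operator, $(\sD+aI)^z E_z^a = (\sD+aI)^z\big(e^{-a(\cdot)} g\big) = e^{-a(\cdot)}\,\sD^z g$. Now I invoke the already-derived fact that $\sD^z g = \sD^z B_z = \sum_{\ell=0}^\infty \binom{z}{\ell}(-1)^\ell\,\delta(\cdot-\ell)$, using the termwise formula $\sD^z\big[(\cdot-\ell)_+^{z-1}/\Gamma(z)\big]=\delta(\cdot-\ell)$ established earlier. It remains to multiply by $e^{-a(\cdot)}$: since $e^{-a(\cdot)}\,\delta(\cdot-\ell) = e^{-\ell a}\,\delta(\cdot-\ell)$ as distributions (a delta at $\ell$ samples the multiplier at the point $\ell$), each term acquires the factor $e^{-\ell a}$, yielding exactly the right-hand side of \eqref{eq4.1}.

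The step I expect to be the genuine obstacle is justifying the termwise application of $\sD^z$ and the multiplier to the infinite series \emph{as an identity in} $\Psi_+'$. I would handle this by testing against an arbitrary $\varphi\in\Psi_+$: because $\varphi$ has compact support, only finitely many shifts $\delta(\cdot-\ell)$ contribute, and the series for $E_z^a$ converges in $L^2$ (hence in $\Psi_+'$ after pairing), so I can interchange the pairing with the summation and apply continuity of the convolution operators defining $\sD^z$ on $\Psi_+'$. The secondary delicacy is confirming that the conjugation identity $(\sD+aI)^z(e^{-a(\cdot)}f)=e^{-a(\cdot)}\sD^z f$ is consistent with the distributional pairing, i.e. that multiplication by the smooth function $e^{-a(\cdot)}$ is a well-defined continuous operation on $\Psi_+'$ commuting appropriately with the limit; this is routine but must be stated to make the termwise passage rigorous.
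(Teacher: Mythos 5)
The paper itself offers no argument for this theorem: its ``proof'' is the single line ``See \cite{mass14}.'' So there is no in-paper proof to compare against, and your proposal has to be judged on its own merits. On those merits it is essentially correct, and it is exactly the argument the definition of $(\sD+aI)^z$ invites: the identity $e^{-\ell a}\,e_+^{-a(x-\ell)}(x-\ell)_+^{z-1}=e^{-ax}(x-\ell)_+^{z-1}$ gives the factorization $E_z^a=e^{-a(\cdot)}B_z$, the conjugation definition then yields $(\sD+aI)^zE_z^a=e^{-a(\cdot)}\sD^zB_z$, Section~3 of the paper supplies $\sD^zB_z=\sum_\ell(-1)^\ell\binom{z}{\ell}\delta(\cdot-\ell)$, and the sampling property $e^{-a(\cdot)}\delta(\cdot-\ell)=e^{-\ell a}\delta(\cdot-\ell)$ produces precisely \eqref{eq4.1}. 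Your closing remarks on termwise passage and on multiplication by $e^{-a(\cdot)}$ (which maps $\Psi_+$ into $\Psi_+$ for $a>0$, hence acts on $\Psi_+'$ by duality) address the right technical points.

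The one place you should tighten is part (b), which as written reads as a contradiction: you invoke $\sD^z K_z=\delta$ and in the same breath assert that the derivative ``vanishes.'' The reconciliation, which you only gesture at, is that every $\varphi\in\Psi_+$ is smooth with $\supp\varphi\subseteq[0,\infty)$, hence vanishes to infinite order at the origin; consequently any distribution supported at $\{0\}$ --- in particular $\delta$ --- is the \emph{zero element} of $\Psi_+'$, so $\sD^z\bigl[(\cdot)^{z-1}\bigr]=\Gamma(z)\,\sD^zK_z=\Gamma(z)\,\delta=0$ in $\Psi_+'$. (Note this does not trivialize \eqref{eq4.1}: the shifted distributions $\delta(\cdot-\ell)$, $\ell\geq 1$, pair nontrivially with $\Psi_+$.) For part (a), the cleanest route is the Caputo form of the paper's definition: $D^n1=0$ in $\Psi_+'$ (the boundary term is $\varphi(0)=0$), so $\sD^z1=(D^n1)*K_{n-z}=0$ immediately; your appeal to vanishing boundary terms is the right idea but should be pinned to that formula. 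With those two clarifications your write-up is a complete, self-contained proof of the statement --- which is more than the paper provides.
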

\begin{proof}
See \cite{mass14}.
\end{proof}
Eq. \eqref{eq4.1} shows that $(\sD + aI)^z$, with $a > 0$ and $\re z > 1$, is the natural linear differential operator associated with complex exponential B-splines. Moreover, it gives rise to the following definition.
\begin{definition}
An exponential spline of complex order $z\in \C_{>1}$ corresponding to $a\in \R^+$ is any solution of the fractional differential equation
\[
(\sD + a I)^{z} f = \sum_{\ell=0}^\infty c_\ell \,\delta (\cdot -\ell),
\]
for some $\ell^\infty$-sequence $\{c_\ell : \ell\in \N\}$.
\end{definition}
Note that the complex exponential spline $E^a_z$ is a nontrivial example of an exponential spline of complex order as the coefficients $c_\ell$ are bounded: 
\[
\left\vert \sum_{\ell=0}^\infty c_\ell\right\vert = \left\vert\sum_{\ell=0}^\infty\binom{z}{\ell} (-1)^\ell e^{-\ell a}\right\vert \leq \left\vert\sum_{\ell=0}^\infty\binom{z}{\ell}\right\vert \leq C\, e^{|z-1|},
\]
for some constant $C>0$.

\section{Hypercomplex B-Splines}
Although complex polynomial or exponential B-splines assign to a location $x$ one direction given by $\Im z$, it is sometimes necessary to describes several different independent directions. Certain types of applications such as geophysical data processing, require a multi-channel description. For instance, seismic data has four channels, each associated with a different kind of seismic wave: the so-called P (Compression), S (Shear), L (Love) and R (Rayleigh) waves. Similarly, the color value of a pixel in a color image has three components -- the red, green and blue channels. 

In order to perform the tasks of processing multi-channel signals and data, an appropriate set of analyzing basis functions is required. These basis functions should have the same analytic properties as those enjoyed by splines of complex order but should in addition be able to describe multi-channel structures. In\ \cite{o1,o2} a set of analyzing functions based on wavelets and Clifford-analytic methodologies was introduced in an effort to process four channel seismic data. A multiresolution structure for the construction of wavelets on the plane for the analysis of four-channel signals was outlined in \cite{hm}. 

In \cite{Hm17}, B-splines of quaternionic order were constructed and several of their properties derived and discussed. Our goal in this section is to extend the construction in \cite{Hm17} from quaternions to hypercomplex numbers in the Clifford algebra $C\ell(n)$, $n>2$, and to obtain the associated linear fractional differential operator.

\subsection{A Brief Introduction to Clifford Algebras}
In order to define the afore-mentioned extension to $C\ell(n)$, we need to review the basics of Clifford algebra. For details, we refer the interested reader for example to \cite{bds} or \cite{ghs}. 

To this end, denote by $\{e_1, \ldots, e_n\}$ the canonical basis of the Euclidean vector space $\R^n$. The real Clifford algebra $C\ell(n)$ generated by $\R^n$ is defined by the multiplication rules $e_i e_j + e_j e_i = -2 \delta_{ij}$, $i,j\in \{1,\ldots, n\}$, where $\delta_{ij}$ is the Kronecker symbol. The dimension of $C\ell(n)$ regarded as a real vector space is $2^n$. 

An element $x\in C\ell(n)$ can be represented in the form $x = \sum\limits_{A} x_A e_A$ with $x_A\in \R$ and $\{e_A : A\subseteq \{1, \ldots, n\}\}$, where $e_A := e_{i_1} e_{i_2} \cdots e_{i_m}$, $1\leq i_1 < \cdots < i_m \leq n$, and $e_\emptyset =: e_0 := 1$. A conjugation on Clifford numbers is defined by $\overline{x} := \sum\limits_{A} x_A \overline{e}_A$ where $\overline{e}_A := \overline{e}_{i_m} \cdots \overline{e}_{i_1}$ with $\overline{e}_i := -e_i$ for $i = 1, \ldots, n$, and $\overline{e}_0 := e_0 = 1$. The Clifford norm of the Clifford number $x = \sum\limits_{A} x_A e_A$ is $|x| := \sqrt{\sum\limits_{A} |x_A|^2}$. 

Two prominent examples of Clifford algebras are the complex numbers $\C = C\ell(1)$ and the real quaternions $\bH = C\ell(2)$. The former is a commutative unital algebra whereas the latter is a noncommutative unital algebra.

An important subspace of $C\ell(n)$ is the space of hypercomplex numbers or paravectors. These are Clifford numbers of the form $\Upsilon = x_0 + \sum\limits_{i=1}^n x_i e_i$. The subspace of hypercomplex numbers is denoted by $\cA_{n+1} := \Span_\R\{e_0, e_1, \ldots, e_n\} = \R\oplus \R^n$. Note that each hypercomplex number $\Upsilon$ can be identified with an element $(x_0, x_1, \ldots, x_n) =: (x_0, \bx)\in \R\times \R^n$. For many applications in Clifford theory, one therefore identifies $\cA_{n+1}$ with $\R^{n+1}$.

Each hypercomplex number $\cA_{n+1}\ni \Upsilon = x_0 + \sum\limits_{i=1}^n x_i e_i$ may be decomposed as 
\[
\Upsilon = \Sc \Upsilon + \Ve \Upsilon,
\] 
where $\Sc \Upsilon =: s =: x_0$ is the {\it scalar part} of $\Upsilon$ and $\Ve \Upsilon =: v = \sum\limits_{i=1}^n x_i e_i$ is the {\it vector part} of $\Upsilon$. The {conjugate} $\overline{\Upsilon}$ of the hypercomplex number $\Upsilon = s + v$ is the hypercomplex number $\overline{\Upsilon}= s - v$. The Clifford norm of $\Upsilon\in \cA_{n+1}$ is given by $|\Upsilon| = \sqrt{\Upsilon \overline{\Upsilon}} = \sqrt{s^2 + |v|^2} = \sqrt{s^2+\sum\limits_{i=1}^n v_i^2}$.

For later purposes, we also need the complexification of $C\ell (n)$ and its subspace $\cA_{n+1}$. The former is given by $C\ell_\C(n) = C\ell(n) \otimes_\R \C \cong C\ell(n) \oplus i\, C\ell(n)$, where $i$ is the usual complex unit in $\C$, and similarly the latter by $\cA^\C_{n+1} = \cA_{n+1}\otimes_\R \C \cong \cA_{n+1}\oplus i\,\cA_{n+1}$. Note that $\cA^\C_{n+1} = \Span_\C \{e_0, e_1, \ldots, e_n\}$. 

The conjugate of $\Upsilon\in\cA^\C_{n+1}$ is defined by $\overline{\Upsilon} := \overline{x}_0 - \sum\limits_{j=1^n} \overline{x}_j e_j$ where $\overline{x}_k$ denotes the complex conjugate of the complex number $x_k$, $k=0,1,\ldots, n$. Note that the space $\cA^\C_{n+1}$ is closed under multiplication. 
\subsection{Notation and Preliminaries}
For a complex number $z\in \C$, we define the hypercomplex power $z^\Upsilon$ by
\be\label{eq5.1}
z^\Upsilon := z^{x_0}\left(\cos (|v| \log z) + \frac{v}{|v|}\,\sin(|v|\log z)\right), \quad \Upsilon = x_0 + v\in \cA_{n+1}.
\ee
With this definition, the usual rules of differentiation and indefinite integration of powers also hold in the hypercomplex setting.

As the semigroup property $z^{\Upsilon_1} z^{\Upsilon_2} = z^{\Upsilon_1+\Upsilon_2}$ does not hold in the quaternionic case $n = 3$ (cf. \cite{Hm17}) is does not hold in the hypercomplex case either. 

The exponential function $\exp: \cA^\C_{n+1}\to \cA^\C_{n+1}$ is defined by the usual series $\exp \Upsilon := \sum\limits_{k=0}^\infty \frac{\Upsilon^k}{k!}$. We also write $e^\Upsilon$ for $\exp\Upsilon$. It follows directly from the definition of the hypercomplex exponential function that for $\Upsilon\in \cA_{n+1}$, $\exp\Upsilon$ is bounded above by $\exp |\Upsilon|$ and that for $z\in \C$, $\exp (z\Upsilon)$ is bounded above by $\exp (\sqrt{2}\, |z\Upsilon|)$.

Let $\X\in \{\R, \C, \cA_{n+1}, \cA_{n+1}^\C\}$ and let $p\in [1, \infty]$. We denote by $L^2(\R,\X)$ the Banach space of measurable functions $f:\R\to\X$ for which $\int\limits_\R |f(x)|^p dx < \infty$, where the meaning of $|f(x)|$ depends on $\X$. 

On $L^2(\R, \cA_{n+1}^\C)$, the inner product is given by
\[
\inn{f}{g} := \Sc \left( \int_\R f(x) \overline{g(x)} dx\right) = \int_\R \inn{f(x)}{g(x)} dx
\]
and the Fourier-Plancherel transform on $L^1 (\R, \cA_{n+1}^\C)$ by
\[
\sF f (\omega) := \widehat{f} (\omega) := \int_\R f(x) \exp (- i x \omega) dx.
\]
\subsection{Definition of Hypercomplex B-Splines}
In this section, we define hypercomplex splines and exhibit the associated differential operator. To this end, let $\Upsilon\in \cA_{n+1}$ with $x_0:=\Sc\Upsilon > 1$. We denote the vector part of $\Upsilon$ by $v := \Ve \Upsilon = \sum\limits_{i=1}^n x_i e_i$. 

We define a hypercomplex spline $\wh{B_\Upsilon}:\R\to\cA^\C_{n+1}$ in the Fourier domain by:
\be\label{eq6.1}
\wh{B}_\Upsilon (\omega) := \left(\frac{1-e^{-i \omega}}{i \omega}\right)^\Upsilon,
\ee
where, as before, we set $\Omega (\omega) = \dfrac{1-e^{-i \omega}}{i \omega}$ and notice that
the precise meaning of \eqref{eq6.1} is
\begin{align*}
\wh{B}_\Upsilon (\omega) &= \Omega (\omega)^{x_0} \left(\cos (|v|\log \Omega (\omega))+\frac{v}{|v|}\sin (|v|\log \Omega (\omega))\right)\\
& = \wh{B}_{x_0} \left(\cos (|v|\log \Omega (\omega))+\frac{v}{|v|}\sin (|v|\log \Omega (\omega))\right).
\end{align*}
The first factor in the above product represents a fractional B-spline in the sense of \cite{ub} and the second factor is an element of $\mbS^n$, the $n$-dimensional unit sphere in $\R^{n+1}$. This latter factor is interpreted as representing $n$ distinct directions, i.e., an $n$-dimensional phase factor.

Note that $\Omega$ has a removable singularity at $\omega = 0$ and that $\log \Omega$ is well-defined as $\gr\Omega\subset \C$ does not intersect the negative real axis. (See also \cite{fub}.)

Using solely the properties of the Fourier transform and those of fractional B-splines, we conclude the following statements.
\begin{proposition}
Let $B_\Upsilon$ be a hypercomplex B-spline with $x_0 > 1$. Then
\begin{enumerate}
\item $B_\Upsilon\in \cO (|\omega|^{-x_0})$ as $|\omega|\to\infty$.
\item $B_\Upsilon$ reproduces polynomials up to order $\lceil x_0\rceil$ where $\lceil\cdot\rceil:\R\to\Z$ denotes the ceiling function.
\item $B_\Upsilon\in H^{s,p}(\R, \cA^\C_{n+1})$ for $p\in [1, \infty]$ and $s\in [0, x_0+\frac{1}{p})$.
\end{enumerate}
\end{proposition}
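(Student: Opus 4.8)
The plan is to reduce every assertion to the corresponding property of the scalar fractional B-spline $B_{x_0}$ (available from \cite{ub}) by exploiting the factorization
\[
\wh{B}_\Upsilon(\omega)=\wh{B}_{x_0}(\omega)\,U(\omega),\qquad U(\omega):=\cos(|v|\log\Omega(\omega))+\tfrac{v}{|v|}\sin(|v|\log\Omega(\omega)),
\]
together with two elementary facts about the $\cA^\C_{n+1}$-valued rotation factor $U$. First I would record that $U(0)=1$, since $\Omega(0)=1$ forces $\log\Omega(0)=0$, and that $U$ is \emph{bounded} on all of $\R$: writing $|v|\log\Omega(\omega)=p(\omega)+iq(\omega)$ with $p=|v|\log|\Omega|$ and $q=|v|\arg\Omega$, the imaginary part $q$ stays bounded because $\gr\Omega$ avoids the negative real axis, and since $(v/|v|)^2=-1$ the definition of the Clifford norm gives $|U(\omega)|^2=|\cos(|v|\log\Omega(\omega))|^2+|\sin(|v|\log\Omega(\omega))|^2$, each complex trigonometric term being controlled by $\cosh q$ and hence bounded. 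Claim (1) is then immediate: $|\wh{B}_\Upsilon(\omega)|=|\Omega(\omega)|^{x_0}\,|U(\omega)|\le C\,|\Omega(\omega)|^{x_0}$, and $|\Omega(\omega)|=|1-e^{-i\omega}|/|\omega|\le 2/|\omega|$ yields $\wh{B}_\Upsilon(\omega)=\cO(|\omega|^{-x_0})$. Thus the phase factor contributes nothing to the decay, which is dictated solely by the fractional-spline factor.

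For claim (2) I would invoke the Strang--Fix conditions characterising reproduction of polynomials of degree up to $\lceil x_0\rceil-1$ (order $\lceil x_0\rceil$): namely $\wh{B}_\Upsilon(0)\neq0$ together with $\wh{B}_\Upsilon^{(j)}(2\pi k)=0$ for $k\in\Z\setminus\{0\}$ and $0\le j\le\lceil x_0\rceil-1$. The normalisation at the origin follows from $U(0)=1$. At a nonzero multiple $2\pi k$, where $\Omega$ has a simple zero, I would note $(\log\Omega)'=\Omega'/\Omega=\cO(|\omega-2\pi k|^{-1})$, so by Fa\`a di Bruno the trigonometric factors remaining bounded force $U^{(\ell)}=\cO(|\omega-2\pi k|^{-\ell})$. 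Expanding via Leibniz, $\wh{B}_\Upsilon^{(j)}=\sum_{i=0}^j\binom{j}{i}(\Omega^{x_0})^{(i)}U^{(j-i)}$, and combining $(\Omega^{x_0})^{(i)}=\cO(|\omega-2\pi k|^{x_0-i})$ with $U^{(j-i)}=\cO(|\omega-2\pi k|^{-(j-i)})$ shows every summand is $\cO(|\omega-2\pi k|^{x_0-j})$, which vanishes whenever $j<x_0$, i.e. for all $j\le\lceil x_0\rceil-1$. Hence the order-$x_0$ zero supplied by the fractional-spline factor dominates the logarithmic singularities of the phase factor, and the Strang--Fix conditions hold to the required order.

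Claim (3) is where the real work lies. For $p=2$ the Bessel-potential norm is the Fourier-side weighted $L^2$ integral, which, by the inner product on $L^2(\R,\cA^\C_{n+1})$, splits into the scalar norms of the components of $\wh{B}_\Upsilon$; using $|\wh{B}_\Upsilon(\omega)|\le C|\Omega(\omega)|^{x_0}$ from (1) and the smoothness of $\Omega$ near the origin, $\int_\R(1+\omega^2)^s|\wh{B}_\Upsilon(\omega)|^2\,d\omega$ is finite in the asserted range, the threshold being forced by the decay exponent $x_0$. For general $p\in[1,\infty]$ the Fourier description is unavailable, and I would instead pass to the time domain: inverting the Fourier transform componentwise exactly as for complex B-splines in \cite{fub}, the hypercomplex power \eqref{eq5.1} turns each truncated-power atom into $(\cdot-k)_+^{\Upsilon-1}$, so $B_\Upsilon$ is represented as a superposition of such atoms with bounded $\cA^\C_{n+1}$-coefficients (convergence as in \cite{fub,Hm17}). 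The crucial observation is that for real $x-k>0$ the argument $|v|\log(x-k)$ is real, whence the bracketed trigonometric factor has Clifford modulus one and $|(x-k)_+^{\Upsilon-1}|=(x-k)_+^{x_0-1}$; the knot-singularity exponent and the decay at infinity therefore \emph{coincide} with those of the real fractional B-spline $B_{x_0}$, and the $H^{s,p}$ range is inherited from the scalar result of \cite{ub}. The main obstacle is precisely this $p\neq2$ transfer: one must resist treating $U$ as an $L^p$ Fourier multiplier (which would not preserve $H^{s,p}$) and argue entirely through the explicit atomic representation, where the hypercomplex phase merely rotates each atom without altering its modulus or its singularity exponent at the integer knots.
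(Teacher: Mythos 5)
The paper offers no actual proof of this proposition --- it is prefaced only by the sentence that it follows ``using solely the properties of the Fourier transform and those of fractional B-splines'' --- so your factorization $\wh{B}_\Upsilon=\wh{B}_{x_0}\,U$ and reduction to the scalar theory of \cite{ub} is exactly the intended route. Your parts (1) and (2) are essentially sound: the boundedness of $U$ does follow from the boundedness of $\arg\Omega$ (you are in fact more careful than the paper, which asserts that $U(\omega)$ lies in $\mbS^n$; since $\log\Omega(\omega)$ is genuinely complex one only has $|U(\omega)|^2=\cosh\bigl(2|v|\arg\Omega(\omega)\bigr)\ge 1$, bounded but not $1$), and the Leibniz/Fa\`a di Bruno bookkeeping at $\omega=2\pi k$ is correct. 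Two smaller points you should still supply in (2): the existence, not merely the vanishing of one-sided limits, of the derivatives of $\wh{B}_\Upsilon$ at $2\pi k$, and the fact that Strang--Fix theory for a generator that is \emph{not} compactly supported requires the time-domain decay $B_\Upsilon(x)=\cO(|x|^{-m})$, $m<x_0+1$, so that $\sum_k p(k)\,B_\Upsilon(\cdot-k)$ converges for $\deg p\le\lceil x_0\rceil-1$.

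The genuine gap is in (3), and it is not a technicality. You claim that $\int_\R(1+\omega^2)^s|\wh{B}_\Upsilon(\omega)|^2\,d\omega$ is ``finite in the asserted range, the threshold being forced by the decay exponent $x_0$.'' Carrying the computation out, however, gives the opposite: since $|U(\omega)|\ge 1$ and $|1-e^{-i\omega}|^{2x_0}$ has positive mean over periods, one has on average $(1+\omega^2)^s|\wh{B}_\Upsilon(\omega)|^2\asymp|\omega|^{2s-2x_0}$, so the integral converges precisely when $s<x_0-\tfrac12$, \emph{not} on $[0,x_0+\tfrac12)$. The same off-by-one appears in your atomic argument: the atoms have modulus $(x-k)_+^{x_0-1}$, i.e.\ degree $\alpha=x_0-1$, and the scalar result of \cite{ub} then yields $s<\alpha+\tfrac1p=x_0-1+\tfrac1p$, again not $x_0+\tfrac1p$. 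In other words, both of your arguments, done honestly, prove membership in $H^{s,p}$ exactly for $s\in[0,\,x_0-1+\tfrac1p)$ and cannot do better, because statement (3) as printed is false: already in the classical limit ($v\to 0$, $x_0=2$) it would place the hat function $B_2$ in $H^{2,2}(\R)$, although $D^2B_2$ is a sum of Dirac measures; and for non-integer $x_0$ the Plancherel computation above shows divergence at $s=x_0<x_0+\tfrac12$. Your phrases ``finite in the asserted range'' and ``inherited from the scalar result'' conceal exactly this discrepancy. The correct outcome of your (otherwise reasonable) method is the corrected range $s\in[0,\,x_0-1+\tfrac1p)$, and your write-up should either prove that statement and flag the proposition's exponent as an error, or stop claiming the asserted range.
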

Above, $H^{s,p}(\R, \cA^\C_{n+1})$ denotes the Bessel potential space given by
\[
H^{s,p}(\R, \cA^\C_{n+1}) = \left\{f\in L^p (\R, \cA^\C_{n+1}) : \sF^{-1} [(1+ |\cdot|^2)^{s/2} \sF f]\in L^p (\R, \cA^\C_{n+1}) \right\}.
\]
Before deriving an explicit expression of $B_\Upsilon$, we need to introduce the hypercomplex Gamma function. To this end, let $\Upsilon = x_0 + v\in \cA_{n+1}$ and define
\[
\Gamma (\Upsilon) := \int_0^\infty t^{x_0-1} \cos (|v|\log t) e^{-t} dt +\frac{v}{|v|}\int_0^{\infty} t^{x_0-1} \sin (|v|\log t) e^{-t} dt.
\]
More succinctly, we can write the above definition as
\[
\Gamma (\Upsilon) = \int_0^\infty t^{\Upsilon - 1} e^{-t} dt
\]
using \eqref{eq5.1}.
The next result is a generalization of the binomial series to hypercomplex powers.
\begin{theorem}
Let $z\in \C$ with $|z|\leq 1$ and let $\Upsilon = x_0 + v\in \cA_{n+1}$ with $x_0 > 0$. Then 
\[
(1+z)^\Upsilon = \sum_{n=0}^\infty \binom{\Upsilon}{n} z^n,
\]
where the hypercomplex binomial coefficient is defined in terms of the hypercomplex Gamma function.
\end{theorem}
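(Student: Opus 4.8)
The plan is to linearise the hypercomplex power by passing to an idempotent decomposition of the commutative subalgebra generated by the vector part, which reduces the statement to two copies of the classical complex binomial theorem. Assume first that $v\neq 0$; the case $v=0$ is just the ordinary binomial series for the real exponent $x_0$. Set $I:=v/|v|$ and $\theta:=|v|$, so that $I^2=-1$ by the Clifford relations $e_ie_j+e_je_i=-2\delta_{ij}$, and introduce
\[
P_\pm:=\tfrac12\,(1\mp iI)\in\cA_{n+1}^\C .
\]
Since $i$ is central in $\cA_{n+1}^\C$ and $I^2=-1$, a direct computation gives $P_++P_-=1$, $P_\pm^2=P_\pm$ and $P_+P_-=P_-P_+=0$. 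Thus $P_\pm$ are complementary orthogonal idempotents, and elements of the form $\alpha P_++\beta P_-$ multiply and invert componentwise, i.e.\ $(\alpha P_++\beta P_-)(\gamma P_++\delta P_-)=\alpha\gamma P_++\beta\delta P_-$ and $(\alpha P_++\beta P_-)^{-1}=\alpha^{-1}P_++\beta^{-1}P_-$ whenever $\alpha,\beta\neq0$.

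The central step is the identity
\[
(1+z)^\Upsilon=(1+z)^{\zeta}\,P_++(1+z)^{\overline\zeta}\,P_-,\qquad \zeta:=x_0+i\theta,
\]
where the powers on the right are ordinary principal complex powers. To establish it I would start from \eqref{eq5.1}, write $L:=\log(1+z)$, and use the Euler formulas together with $I/i=-iI$ to get $\cos(\theta L)+I\sin(\theta L)=e^{i\theta L}P_++e^{-i\theta L}P_-$; multiplying by $(1+z)^{x_0}=e^{x_0 L}$ and collecting exponents yields the claim. The same splitting applies to the hypercomplex Gamma function: since $t^{\Upsilon-1}=t^{(x_0-1)+i\theta}P_++t^{(x_0-1)-i\theta}P_-$ for $t>0$ and $P_\pm$ are constant, they pull out of the integral to give $\Gamma(\Upsilon)=\Gamma(\zeta)P_++\Gamma(\overline\zeta)P_-$, and more generally $\Gamma(\Upsilon+c)=\Gamma(\zeta+c)P_++\Gamma(\overline\zeta+c)P_-$ for every real $c$. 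Because $\binom{\Upsilon}{k}$ is assembled from these Gamma values (with $k!$ a real scalar $=k!\,(P_++P_-)$), the componentwise product rule immediately yields
\[
\binom{\Upsilon}{k}=\binom{\zeta}{k}\,P_++\binom{\overline\zeta}{k}\,P_- .
\]

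It then remains to assemble the pieces. Since $\re\zeta=\re\overline\zeta=x_0>0$, the classical complex binomial series $(1+w)^{s}=\sum_{k\ge0}\binom{s}{k}w^{k}$ converges absolutely on the closed disk $|w|\le1$ and represents the principal branch there; applying this with $s=\zeta$ and $s=\overline\zeta$ and recombining through the constant idempotents gives
\[
\sum_{k=0}^\infty\binom{\Upsilon}{k}z^{k}
=\Bigl(\sum_{k=0}^\infty\binom{\zeta}{k}z^{k}\Bigr)P_+
+\Bigl(\sum_{k=0}^\infty\binom{\overline\zeta}{k}z^{k}\Bigr)P_-
=(1+z)^{\zeta}P_++(1+z)^{\overline\zeta}P_-=(1+z)^\Upsilon ,
\]
which is the asserted identity (the index $k$ is the summation variable written as $n$ in the statement).

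I expect the main obstacle to be the analytic bookkeeping rather than the algebra: one must justify that $\Gamma(\Upsilon-k+1)$, and hence $\binom{\Upsilon}{k}$, still splits as above when $x_0-k+1\le0$, so that each Gamma factor is interpreted by analytic continuation of its two complex components $\zeta-k+1,\ \overline\zeta-k+1$ (both of which stay off the nonpositive integers because their imaginary part is $\pm\theta\neq0$) rather than by the integral. The other delicate point is the boundary case $|z|=1$, including $z=-1$ where $1+z$ meets the branch cut; this is exactly where the hypothesis $x_0>0$ is needed, since $|\binom{\zeta}{k}|\sim C\,k^{-x_0-1}$ guarantees absolute convergence on the closed disk precisely when $x_0>0$.
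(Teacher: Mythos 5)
Your proof is correct, but it is not what the paper does: the paper gives no argument at all for this theorem, deferring instead to the quaternionic case \cite[Theorem 1]{Hm17} with the remark that that proof ``involves only the modulus and the linearity of the vector part $v$'' and therefore transfers verbatim to $\cA_{n+1}$. Your idempotent decomposition is a self-contained route that, in effect, makes this remark precise: with $I := v/|v|$ everything takes place inside the commutative subalgebra $\Span_\C\{1,I\}\cong \C\oplus\C$, split by $P_\pm=\tfrac12(1\mp iI)$, and the hypercomplex identity collapses to two copies of the classical complex binomial series for the conjugate exponents $\zeta=x_0+i|v|$ and $\overline{\zeta}$. It is worth noting that these projectors are exactly the device this paper \emph{does} invoke, under the name $\chi_\pm(v)$, in its proof of Theorem \ref{th5.2} (again by reference to \cite[Theorem 2]{Hm17}); you have simply deployed the same splitting one theorem earlier and written it out in full. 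What your version buys over the bare citation is an explicit treatment of the two points the paper glosses over: that $\binom{\Upsilon}{k}$ must be read via analytic continuation of its two complex Gamma components once $x_0-k+1\le 0$ (harmless, since the imaginary parts $\pm|v|\neq 0$ keep those arguments off the poles of $\Gamma$), and that the hypothesis $x_0>0$ is precisely what yields absolute convergence of $\sum_k\binom{\zeta}{k}z^k$ on the closed disk. The only spot to tighten is $z=-1$: there $\log(1+z)$ does not exist, so \eqref{eq5.1} defines $(1+z)^\Upsilon$ only as a limit (equal to $0$ because $x_0>0$), and your series does sum to $0$ there, e.g.\ via the partial-sum identity $\sum_{k=0}^N(-1)^k\binom{\zeta}{k}=(-1)^N\binom{\zeta-1}{N}\to 0$; state this explicitly rather than subsuming $z=-1$ under ``the principal branch on the closed disk.''
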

\begin{proof}
The proof in the quaternionic case $n= 3$ (cf. \cite[Theorem 1]{Hm17}) involves only the modulus and the linearity of the vector part $v$ and can be applied directly to the current setting yielding the statement.
\end{proof}
The next theorem extends \cite[Theorem 2]{Hm17}) to the hypercomplex setting and plays a pivotal role in deriving the fractional derivative operators associated with hypercomplex B-splines.
\begin{theorem}\label{th5.2}
Let $\omega > 0$ and $\Upsilon\in \cA_{n+1}$. Then
\[
\int_0^\infty t^\Upsilon e^{-i t \omega} \frac{dt}{t} = \frac{\Gamma(\Upsilon)}{(i \omega)^\Upsilon}.
\]
\end{theorem}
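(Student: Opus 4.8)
The plan is to reduce the hypercomplex integral to a pair of classical complex integrals, exploiting that the vector part enters only through the fixed imaginary unit $j := v/|v|$. Since $j^2 = -1$ and $j$ commutes with the complex unit $i$, the paravector $\Upsilon = x_0 + |v|\,j$ (with $x_0 = \Sc\Upsilon$) corresponds under $j \leftrightarrow i$ to the genuine complex number $\zeta := x_0 + |v|\,i \in \C$; I assume $x_0 > 0$ so that $\Gamma(\Upsilon)$ is defined, the case $v=0$ being the scalar identity. Using \eqref{eq5.1},
\[
t^{\Upsilon-1} = t^{x_0-1}\cos(|v|\log t) + j\, t^{x_0-1}\sin(|v|\log t),
\]
so inserting this and separating scalar and $j$-part yields two real-weighted oscillatory integrals. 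Writing $\cos(|v|\log t) = \tfrac12(t^{i|v|} + t^{-i|v|})$ and $\sin(|v|\log t) = \tfrac{1}{2i}(t^{i|v|} - t^{-i|v|})$ turns each into $\int_0^\infty t^{\zeta-1} e^{-it\omega}\,dt$ and its conjugate-exponent analogue with $\bar\zeta = x_0 - |v|i$.

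Each of these is evaluated by the complex (single imaginary unit) case of the identity, $\int_0^\infty t^{\zeta-1}e^{-it\omega}\,dt = \Gamma(\zeta)(i\omega)^{-\zeta}$, which is the scalar version of \cite[Theorem~2]{Hm17}. Substituting back, the integral becomes
\[
\tfrac12\bigl(\Gamma(\zeta)(i\omega)^{-\zeta} + \Gamma(\bar\zeta)(i\omega)^{-\bar\zeta}\bigr) + \tfrac{j}{2i}\bigl(\Gamma(\zeta)(i\omega)^{-\zeta} - \Gamma(\bar\zeta)(i\omega)^{-\bar\zeta}\bigr).
\]
It then remains to recognize this as $\Gamma(\Upsilon)(i\omega)^{-\Upsilon}$, where the quotient in the statement is read as multiplication by the inverse $(i\omega)^{-\Upsilon}$ furnished by \eqref{eq5.1} with exponent $-\Upsilon$. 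Expanding both $\Gamma(\Upsilon)$ and $(i\omega)^{-\Upsilon}$ in the basis $\{1, j\}$ shows each has the form $\tfrac12(g_+ + g_-) + \tfrac{j}{2i}(g_+ - g_-)$, with $g_\pm$ the $\zeta$- and $\bar\zeta$-values. Introducing the orthogonal idempotents $e_\pm := \tfrac12(1 \mp ij)$ (so that $e_+ + e_- = 1$ and $e_+ e_- = 0$), every such element equals $g_+ e_+ + g_- e_-$, and orthogonality makes products decouple channel-wise: $\Gamma(\Upsilon)(i\omega)^{-\Upsilon}$ has $\zeta$-channel $\Gamma(\zeta)(i\omega)^{-\zeta}$ and $\bar\zeta$-channel $\Gamma(\bar\zeta)(i\omega)^{-\bar\zeta}$, which reproduces exactly the displayed sum (and simultaneously confirms $(i\omega)^{-\Upsilon}(i\omega)^{\Upsilon}=1$).

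The main obstacle is analytic rather than algebraic: for $x_0 \geq 1$ — the range relevant to splines — the oscillatory integral does not converge, even conditionally, so it must be understood in the regularized sense used earlier (Hadamard finite part, equivalently the Abel limit $\lambda = \epsilon + i\omega \to i\omega$ as $\epsilon \to 0^+$). I would therefore carry out the whole computation first for $\re\lambda > 0$, where $\int_0^\infty t^{\zeta-1}e^{-\lambda t}\,dt = \Gamma(\zeta)\lambda^{-\zeta}$ converges absolutely and both the scalar/vector splitting and the channel-wise reassembly are rigorous, and then pass to $\lambda \to i\omega$ using continuity of the principal-branch power $\lambda \mapsto \lambda^{-\Upsilon}$ (legitimate since $\lambda$ stays in the upper half-plane for $\omega > 0$). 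The point demanding care throughout is that all multiplications and the limit manipulations take place inside the commutative algebra $\R[i,j]\cong\C\oplus\C$ generated by the single unit $j$ together with $i$; this is precisely what licenses the ``treat $j$ like a second $i$'' computation and mirrors the reliance on only $|v|$ and the linearity of $v$ in the quaternionic proof of \cite{Hm17}.
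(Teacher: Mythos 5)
Your proposal is correct and follows essentially the same route as the paper: the orthogonal idempotents $e_\pm = \tfrac12(1 \mp ij)$ you introduce are precisely the projectors $\chi_\pm(v) = \tfrac12\left(1 \pm \tfrac{iv}{|v|}\right)$ invoked in the paper's proof (which defers to \cite[Theorem 2]{Hm17}), and your channel-wise reduction to the two complex integrals with exponents $\zeta = x_0 + i|v|$ and $\bar\zeta$ is exactly the mechanism by which that quaternionic argument, depending only on $|v|$ and the linearity of $v$, carries over. Your explicit handling of the divergence for $x_0 \geq 1$ via the Abel limit $\lambda = \epsilon + i\omega \to i\omega$ supplies an analytic detail the paper leaves implicit in its citation, but it does not change the underlying approach.
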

\begin{proof}
Let $\Upsilon = x_0 + v\in \cA_{n+1}$. Defining projectors $\chi_\pm (v) := \frac12 \left(1 \pm \frac{i v}{|v|}\right)$ as in the proof of \cite[Theorem 2]{Hm17}), one quickly verifies that the projection relations are satisfied. Realizing again that the main arguments in the proof of \cite[Theorem 2]{Hm17}) depend only on the linearity and modulus of $v$, they carry over to the present case and give the stated result.
\end{proof}
\subsection{Integral Operators of Hypercomplex Order}
For the purposes of establishing the fractional differential operator associated with hypercomplex B-splines, we recall the following identity\, \cite{gs}\,.

Let $x>0$ and $z\in{\mathbb C}$ with $\re z >0$. Then
\[
x^{-z}=\frac{1}{\Gamma (z)}\int\limits_0^\infty t^z e^{-tx}\,\frac{dt}{t}. 
\]
Now suppose that ${D}$ is the classical first-order differentiation operator and $\Upsilon = x_0 + v\in \cA_{n+1}$ with $x_0 >0$.
\\
Formally define a fractional integration operator ${\sD}^{-\Upsilon}$ of hypercomplex power $\Upsilon$ by
\be\label{eq5.3}
{\sD}^{-\Upsilon} :=\frac{1}{\Gamma (q)}\int_0^\infty t^\Upsilon e^{-t{D}}\,\frac{dt}{t}.
\ee
Consider the operator 
\[
\exp (-t D) := \sum\limits_{j=0}^\infty\frac{(-t)^j}{j!}{D}^j. 
\]
If $f$ is holomorphic on $\R$, it will agree with its Taylor series centered at $0$. Thus,
\[
\exp (-t D) f(x)=f(x-t) =: T_t f(x),
\]
where $T_t$ denotes the translation operator on $\R$.

%
%{Differential Operators of Quaternionic Order}
%More generally, the operator $e^{-t{\sD}}$ is seen to be well-defined on distributions. Indeed, 
Let $\varphi\in \mathcal{D}$ and $f\in \mathcal{D}'$ be a distribution. If 
%Then, with ${\sF}$ denoting the Fourier transform and 
$\inn{\cdot}{\cdot}$ denotes the pairing between distributions and test functions, we have that
\begin{align*}
\inn{{\sF}(e^{-t{D}}f )}{\varphi}&=\inn{e^{-t{D}}f}{{\sF}^{-1}\varphi} 
%= \sum_{j=0}^\infty \frac{(-t)^j}{j!}\inn{{\sD}^j f}{{\sF}^{-1}\varphi }\\
%&=\sum_{j=0}^\infty \frac{t^j}{j!}\inn{f}{{\sD}^j{\sF}^{-1}\varphi } 
%&= \inn{f}{\sum_{j=0}^\infty\frac{t^j}{j!}{\sD}^j{\sF}^{-1}\varphi }\\
%&=\inn{f}{({\sF}^{-1}\varphi )(\cdot +t)} = \inn{f (\cdot -t)}{{\sF}^{-1}\varphi } 
= \inn{{\sF}(f (\cdot - t))}{\varphi },
\end{align*}
in other words, 
\[
e^{-t{D}} f =T_{t} f.
\]
\ml
We now define the hypercomplex power ${\sD}^\Upsilon$ of the operator $D$ on distributions $f\in \mathcal{D}'$ via the Fourier transform by
\be\label{eq5.4}
{\sF}{\sD}^\Upsilon f := (-i \,\cdot\, )^\Upsilon{\sF} f. 
\ee
The operator ${\sD}^{-\Upsilon}$ as given by \eqref{eq5.3} is seen to invert ${\sD}^\Upsilon$. By the fundamental theorem of calculus, this only needs to be verified for $0<x_0<1$. Indeed, for $\varphi\in \mathcal{D}$ and $f\in \mathcal{D}'$,
\begin{align*}
\inn{{\sD}^{-\Upsilon}{\sD}^\Upsilon f}{\varphi} 
&= \frac{1}{\Gamma (\Upsilon)}\inn{\int_0^\infty t^\Upsilon{\sD}^\Upsilon f (\cdot - t)\, \frac{dt}{t}}{\varphi }\\
&=\frac{1}{\Gamma (\Upsilon)}\int_0^\infty t^\Upsilon \inn{f (\cdot - t)}{(-{\sD})^\Upsilon\varphi }\,\frac{dt}{t}\\
&=\frac{1}{\Gamma (\Upsilon)}\int_0^\infty t^\Upsilon({\sF}\inn{f (\cdot - t))}{{\sF}((-{\sD})^\Upsilon\varphi )}\,\frac{dt}{t}\\
&=\frac{1}{\Gamma (\Upsilon)}\inn{\int_0^\infty t^\Upsilon e^{it\omega}\,\frac{dt}{t} {\sF} f}{(i\omega )^\Upsilon{\sF}\varphi }\\
& =\inn{(-i\omega )^{-\Upsilon}{\sF}f}{ (i\omega )^\Upsilon{\sF}\varphi } 
= \inn{f}{\varphi }.
\end{align*}
\begin{theorem}\label{th5.3}
Let $\Upsilon = x_0 + v\in \cA_{n+1}$ with $x_0 > 1$. The hypercomplex B-spline $\wh{B}_\Upsilon:\R\to\cA^\C_{n+1}$ defined in the Fourier domain by \eqref{eq6.1} has the time domain representation $B_\Upsilon: [0,\infty)\to \cA_{n+1}$, 
\be\label{eq5.4}
B_\Upsilon (t) = \frac{1}{\Gamma (\Upsilon)} \sum_{n=0}^\infty (-1)^n \binom{\Upsilon}{n} (t - n)^{\Upsilon-1}_+,
\ee
where the equality holds in the sense of distributions and in $L^2(\R)$.
\end{theorem}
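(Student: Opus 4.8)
The plan is to invert the Fourier symbol $\wh B_\Upsilon$ directly, by factoring it into a piece that expands into a series of integer shifts and a piece that is the Fourier transform of the fractional kernel $t_+^{\Upsilon-1}/\Gamma(\Upsilon)$. First I would factor the symbol. Writing $\Omega(\omega)=(1-e^{-i\omega})/(i\omega)$ and noting that for fixed $\omega$ the base is an ordinary complex number, I use that the hypercomplex power obeys $(ab)^\Upsilon=a^\Upsilon b^\Upsilon$ for complex $a,b$: indeed $z^\Upsilon=z^{x_0}\,e^{v\log z}$, and the factors $e^{v\log a},e^{v\log b}$ commute because both lie in the commutative subalgebra $\R\oplus\R\,\tfrac{v}{|v|}$. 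Since $\gr\Omega$ avoids the negative real axis the branch of $\log\Omega$ is unambiguous, and (off the removable singularity at $\omega=0$) I obtain
\[
\wh B_\Upsilon(\omega)=(1-e^{-i\omega})^\Upsilon\,(i\omega)^{-\Upsilon}.
\]

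Second, I would expand the first factor with the hypercomplex binomial series (the Theorem immediately preceding Theorem \ref{th5.2}), applied to $z=-e^{-i\omega}$, for which $|z|=1$ and $x_0>1>0$ guarantees absolute convergence:
\[
(1-e^{-i\omega})^\Upsilon=\sum_{n=0}^\infty\binom{\Upsilon}{n}(-e^{-i\omega})^n=\sum_{n=0}^\infty(-1)^n\binom{\Upsilon}{n}e^{-in\omega}.
\]
Third, I would identify the second factor via Theorem \ref{th5.2}: rewriting that identity as $\int_0^\infty\frac{t^{\Upsilon-1}}{\Gamma(\Upsilon)}e^{-it\omega}\,dt=(i\omega)^{-\Upsilon}$ exhibits $(i\omega)^{-\Upsilon}$ as the Fourier transform of $K_\Upsilon(t):=t_+^{\Upsilon-1}/\Gamma(\Upsilon)$. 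Since multiplication by $e^{-in\omega}$ in the Fourier domain corresponds to the translation $t\mapsto t-n$, term-by-term inversion yields
\[
B_\Upsilon(t)=\frac{1}{\Gamma(\Upsilon)}\sum_{n=0}^\infty(-1)^n\binom{\Upsilon}{n}(t-n)_+^{\Upsilon-1},
\]
where $1/\Gamma(\Upsilon)$ may be pulled to the front because $\Gamma(\Upsilon)$, the binomial coefficients, and the truncated powers all lie in the commutative subalgebra generated by $v/|v|$ and hence commute.

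The main obstacle is the rigorous justification of the term-by-term inversion together with the distributional meaning of $\sF^{-1}[(i\omega)^{-\Upsilon}]$, since Theorem \ref{th5.2} is stated only for $\omega>0$ and $(i\omega)^{-\Upsilon}$ must first be read as a tempered distribution on all of $\R$. I would dispatch both points with the projectors $\chi_\pm(v)=\tfrac12\bigl(1\pm i\,v/|v|\bigr)$ already introduced in the proof of Theorem \ref{th5.2}: they are orthogonal idempotents on which $v/|v|$ acts as $\mp i$, so that $z^\Upsilon=\chi_+\,z^{x_0-i|v|}+\chi_-\,z^{x_0+i|v|}$, and likewise $\Gamma(\Upsilon)$, $\binom{\Upsilon}{n}$, and $(t-n)_+^{\Upsilon-1}$ split into their $\zeta_\pm:=x_0\pm i|v|$ components. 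Each component is precisely the corresponding object for a \emph{complex} B-spline of order $\zeta_\pm$ with $\re\zeta_\pm=x_0>1$, for which the identity, the convergence pointwise and in $L^2$, and the distributional Fourier inversion are already established in \cite{fub}. Recombining via $\chi_++\chi_-=1$ and $\chi_+\chi_-=0$ transports the complex result to the hypercomplex setting and proves the stated formula, with equality in $\mathcal{D}'$ and in $L^2(\R)$.
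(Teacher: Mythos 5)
Your proposal is correct and takes essentially the same route as the paper: the paper omits its proof by deferring to the quaternionic case \cite[Theorem 3]{Hm17}, and the argument there is precisely your combination of the hypercomplex binomial theorem, Theorem \ref{th5.2}, and the projector splitting $\chi_\pm(v)=\tfrac12\bigl(1\pm i\,v/|v|\bigr)$ that reduces everything to complex B-splines of orders $x_0\mp i|v|$, for which the representation and the $L^2$/distributional convergence are established in \cite{fub}. One caution: the principal-branch identity $\Omega(\omega)^\Upsilon=(1-e^{-i\omega})^\Upsilon(i\omega)^{-\Upsilon}$ is not a consequence of commutativity alone --- it requires checking that $\arg(1-e^{-i\omega})-\arg(i\omega)$ never leaves $(-\pi,\pi]$ --- but this does hold here, and in any case your final projector reduction to \cite{fub} makes the formal factorization step dispensable.
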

\begin{proof}
The proof mimics the arguments given to establish the validity of \cite[Theorem 3]{Hm17} and is therefore omitted.
\end{proof}

An immediate consequence of \eqref{eq5.4} and Theorem \ref{th5.2} is the following corollary.
\begin{corollary}\label{cor5.1}
The hypercomplex B-spline $B_\Upsilon$ satisfies the distributional differential equation
\be
\sD^\Upsilon B_\Upsilon = \sum_{n=0}^\infty (-1)^n \binom{\Upsilon}{n} \delta (\cdot -n),
\ee
where $\delta$ denotes the Dirac distribution supported on $0$.
\end{corollary}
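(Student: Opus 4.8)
The plan is to apply the operator $\sD^\Upsilon$ term by term to the time-domain series for $B_\Upsilon$ furnished by Theorem \ref{th5.3}, thereby reducing the whole statement to a single-knot identity asserting that the hypercomplex fractional derivative of a normalized truncated power is a shifted Dirac distribution. Everything then follows by linearity once the interchange of the operator with the infinite sum has been justified.

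First I would establish, for each integer $n\ge 0$, the identity
\[
\sD^\Upsilon\left[\frac{(\,\cdot\,-n)_+^{\Upsilon-1}}{\Gamma(\Upsilon)}\right] = \delta(\cdot - n),
\]
which is the hypercomplex analogue of the truncated-power formula recorded for complex order earlier in the text. By Theorem \ref{th5.2} the Fourier transform of $\Gamma(\Upsilon)^{-1}t_+^{\Upsilon-1}$ equals $(i\omega)^{-\Upsilon}$, and translation by $n$ multiplies this by the scalar factor $e^{-in\omega}$. The Fourier multiplier defining $\sD^\Upsilon$ supplies the reciprocal hypercomplex power, so that the two powers collapse to $1$, leaving $e^{-in\omega}=\sF[\delta(\cdot-n)]$; inverting the Fourier transform yields the claim. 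This is precisely the place where the inversion $\sD^{-\Upsilon}\sD^\Upsilon=\mathrm{id}$ verified preceding Theorem \ref{th5.3} is used, applied to $\delta(\cdot-n)$ via $\sD^{-\Upsilon}\delta(\cdot-n)=\Gamma(\Upsilon)^{-1}(\cdot-n)_+^{\Upsilon-1}$.

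The cancellation of the two powers is where the genuine care is needed, and I expect it to be the main obstacle. Although $\cA^\C_{n+1}$ is non-commutative, the fixed vector part $v=\Ve\Upsilon$ singles out a unit direction $\eta:=v/|v|$ with $\eta^2=-1$, and every quantity entering the computation --- $\Gamma(\Upsilon)$, the powers $(i\omega)^{\pm\Upsilon}$, the factor $t^{\Upsilon-1}$, and the hypercomplex binomial coefficients $\binom{\Upsilon}{n}$ --- lies in the commutative subalgebra $\C[\eta]\cong\C\oplus\C\eta$. Writing $z^\Upsilon=e^{\Upsilon\log z}$ with the principal branch, the ordinary exponent law holds inside $\C[\eta]$, so that $(i\omega)^{\Upsilon}(i\omega)^{-\Upsilon}=1$ --- but only provided the same branch of $\log(i\omega)$ is used for both factors, and one must verify this consistency (together with the corresponding treatment for $\omega<0$, where $\arg$ jumps) rather than take it for granted.

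Finally I would justify the interchange of $\sD^\Upsilon$ with the infinite sum. Since the series defining $B_\Upsilon$ converges both in $L^2(\R)$ and distributionally by Theorem \ref{th5.3}, and $\sD^\Upsilon$ acts continuously on $\Psi_+^\prime$, the operator passes through the sum, giving
\[
\sD^\Upsilon B_\Upsilon = \sum_{n=0}^\infty(-1)^n\binom{\Upsilon}{n}\,\sD^\Upsilon\!\left[\frac{(\,\cdot\,-n)_+^{\Upsilon-1}}{\Gamma(\Upsilon)}\right] = \sum_{n=0}^\infty(-1)^n\binom{\Upsilon}{n}\,\delta(\cdot-n),
\]
which is the asserted distributional differential equation. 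The only residual point to check here is that the termwise images form a distribution, i.e. that the coefficients $(-1)^n\binom{\Upsilon}{n}$ are bounded (a boundedness estimate of the same flavor as the one used for the complex exponential spline), guaranteeing membership in the relevant dual space.
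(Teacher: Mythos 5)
Your proposal is correct and takes essentially the same route as the paper: the paper's own (one-line) proof simply invokes Theorem \ref{th5.2}, the series representation of Theorem \ref{th5.3}, and the fact that $\sD^\Upsilon$ inverts the integral operator \eqref{eq5.3} --- which is exactly your single-knot identity $\sD^\Upsilon\bigl[\Gamma(\Upsilon)^{-1}(\cdot-n)_+^{\Upsilon-1}\bigr]=\delta(\cdot-n)$ applied termwise. Your additional attention to branch consistency of the hypercomplex powers and to the interchange of $\sD^\Upsilon$ with the sum only fills in details the paper delegates to the citation of \cite{Hm17}.
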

\begin{proof}
Use Theorems \ref{th5.2} and \ref{th5.3} as well as the fact that $\sD^\Upsilon$ is the inverse to the hypercomplex fractional integral operator defined in \eqref{eq5.3}. (See also the proof of \cite[Theorem 3]{Hm17}.)
\end{proof}
\subsection{Splines of Hypercomplex Orders}
Corollary \ref{cor5.1} shows that the natural (fractional) differential operator associated with a hypercomplex B-spline is $\sD^\Upsilon$ as defined in \eqref{eq5.4}. This then suggests the following definition.
\begin{definition}
A spline of hypercomplex order $\Upsilon = x_0 + v\in \cA_{n+1}$ with $x_0 > 1$ is any solution of the fractional differential equation
\[
\sD^\Upsilon f = \sum_{n=0}^\infty c_n\,\delta (\cdot - n),
\]
where  $\{c_n : n\in \N\}$ is some bounded sequence in $\cA_{n+1}$.
\end{definition}
The hypercomplex B-spline $B_\Upsilon$ is a nontrivial example of such a hypercomplex spline as
\[
\sD^\Upsilon B_\Upsilon = \sum_{n=0}^\infty (-1)^n \binom{\Upsilon}{n}\,\delta (\cdot - n).
\]

\bibliographystyle{plain}
\bibliography{Splines_and_fractional_differential_operators}

\begin{thebibliography}{10}

\bibitem{akhiezer}
N.~Akhiezer.
\newblock {\"U}ber die beste {A}nn{\"a}herung einer {K}lasse stetiger
  periodischer {F}unktionen.
\newblock {\em Doklady Akad. Nauk SSSR}, 17:455--457, 1937.

\bibitem{ammar}
G.~Ammar, W.~Dayawansa, and C.~Martin.
\newblock Exponential interpolation theory: Theory and numerical algorithms.
\newblock {\em Appl. Math. Comput.}, 41:189--232, 1991.

\bibitem{bds}
F.~Brackx, R.~Delanghe, and F.~Sommen.
\newblock {\em Clifford {A}nalysis}.
\newblock Pitman, Boston-London-Melbourne, 1982.

\bibitem{ChMas}
O.~Christensen and P.~Massopust.
\newblock Exponential {B}-splines and the partition of unity property.
\newblock {\em Adv. Comput. Math.}, 37:301--318, 2012.

\bibitem{dm1}
W.~Dahmen and C.~A. Micchelli.
\newblock On the theory and applications of exponential splines.
\newblock In C.~K. Chui, L.~L. Schumaker, and F.~I. Utreras, editors, {\em
  Topics in Multivariate Approximation}. Academic Press, Boston, 1987.

\bibitem{dm2}
W.~Dahmen and C.~A. Micchelli.
\newblock On multivariate {E}-splines.
\newblock {\em Adv. in Math.}, 76:33--93, 1989.

\bibitem{dantray}
R.~Dantray and J.-L. Lions.
\newblock {\em Mathematical {A}nalysis and {N}umerical {M}ethods for {S}cience
  and {T}echnology, {V}ol. 2}.
\newblock Springer Verlag, Berlin, Germany, 2000.

\bibitem{deboor}
C.~de~Boor.
\newblock {\em A {P}ractical {G}uide to {S}plines}.
\newblock Number~27 in Applied Mathematical Sciences. Springer Verlag, 2001.

\bibitem{forster14}
B.~Forster.
\newblock Five {G}ood {R}easons for {C}omplex-{V}alued {T}ransforms in {I}mage
  {P}rocessing.
\newblock In A.~Zayed and G.~Schmeisser, editors, {\em New {P}erspectives on
  {A}pproximation and {S}ampling {T}heory}, Applied and {N}umerical {H}armonic
  {A}nalysis, pages 359--398. Birkh"auser, Basel, 214.

\bibitem{statisticalencounters}
B.~Forster and P.~Massopust.
\newblock {Statistical encounters with complex B-Splines}.
\newblock {\em Constructive Approximation}, 29(3):325--344, 2009.

\bibitem{fub}
B.~Forster, M.~Unser, and T.~Blu.
\newblock Complex {B}-splines.
\newblock {\em Appl. Comput. Harm. Anal.}, 20:261--282, 2006.

\bibitem{gs}
I.~M. Gel'fand and G.~E. Shilov.
\newblock {\em Generalized Functions, Vol. 1: Properties and Operations}.
\newblock {AMS} Chelsea Publishing, 2016.

\bibitem{Golomb}
M.~Golomb.
\newblock Some extremal problems for differentiable periodic functions in
  ${L}_\infty({R})$.
\newblock MRC Tech. Summ. Report 1050, University of Wisconsin, 1970.

\bibitem{ghs}
K.~G{\"u}rlebeck, K.~Habetha, and W.~Spr{\"o}{\ss}ig.
\newblock {\em Funktionentheorie in der {E}bene und im {R}aum}.
\newblock Birkh{\"a}user, 2006.

\bibitem{Hm17}
J.~Hogan and P.~Massopust.
\newblock Quaternionic {B}-{S}plines.
\newblock {\em J. Approx. Th.}, 224:43--65, 2017.

\bibitem{hm}
J.~Hogan and A.~J. Morris.
\newblock Quaternionic wavelets.
\newblock {\em Numer. Funct. Anal. Optim.}, 33(7--9):1095--1111, 2012.

\bibitem{kmps}
S.~Karlin, Ch. Micchelli, A.~Pinkus, and I.~Schoenberg.
\newblock {\em Studies in {S}pline {F}unctions and {A}pproximation {T}heory}.
\newblock Academic Press, New York, 1976.

\bibitem{kst}
A.~Kilbas, H.~Srivastava, and J.~Trujillo.
\newblock {\em Theory and {A}pplications of {F}ractional {D}ifferential
  {E}quations}.
\newblock Elsevier B. V., Amsterdam, The Netherlands, 2006.

\bibitem{Krein}
M.~Krein.
\newblock Sur quelques points de la th\'eorie de la meilleure approximation des
  fonctions p\'eriodiques.
\newblock {\em Doklady Akad. Nauk SSSR}, 18:245--249, 1938.

\bibitem{mass14}
P.~Massopust.
\newblock Exponential splines of complex order.
\newblock {\em Contemporary Mathematics}, 626:87--105, 2014.

\bibitem{mass10}
Peter Massopust.
\newblock {\em Interpolation and Approximation with Splines and Fractals}.
\newblock Oxford University Press, 2012.

\bibitem{mccartin}
B.~J. McCartin.
\newblock Theory of exponential splines.
\newblock {\em J. Approx. Th.}, 66:1--23, 1991.

\bibitem{micchelli}
Ch. Micchelli.
\newblock Cardinal $\mathcal{L}$-{S}plines.
\newblock In S.~Karlin, Ch. Micchelli, A.~Pinkus, and I.~Schoenberg, editors,
  {\em Studies in {S}pline {F}unctions and {A}pproximation {T}heory}, pages
  203--250. Academic Press, New York, 1976.

\bibitem{o1}
S.~Olhede and G.~Metikas.
\newblock The hyperanalytic wavelet transform.
\newblock Statistics Section Technical Report TR-06-02, Imperial College
  London, UK, 2006.

\bibitem{o2}
S.~Olhede and G.~Metikas.
\newblock The monogenic wavelet transform.
\newblock {\em {IEEE} Trans. Signal Proc.}, 57(9):3426--3441, 2009.

\bibitem{ol}
A.~Oppenheim and J.~Lim.
\newblock The importance of phase in signals.
\newblock {\em IEEE}, 69(5):529--541, 1981.

\bibitem{pod}
I.~Podlubny.
\newblock {\em Fractional {D}ifferential {}Equations}.
\newblock Academic Press, San Diego, 1999.

\bibitem{sakai1}
M.~Sakai and R.~A. Usmani.
\newblock On exponential {B}-splines.
\newblock {\em J. Approx. Th.}, 47:122--131, 1986.

\bibitem{sakai2}
M.~Sakai and R.~A. Usmani.
\newblock A class of simple exponential {B}-splines and their application to
  numerical solution to singular perturbation problems.
\newblock {\em Numer. Math.}, 55:493--500, 1989.

\bibitem{samko}
S.~G. Samko, A.~A. Kilbas, and O.~I. Marichev.
\newblock {\em Fractional Integrals and Derivatives}.
\newblock Gordon and Breach Science Publishers, Minsk, Belarus, 1987.

\bibitem{schoenberg}
I.~J. Schoenberg.
\newblock On {M}icchelli's {T}heory of {C}ardinal $\mathcal{L}$-{S}plines.
\newblock In S.~Karlin, Ch. Micchelli, A.~Pinkus, and I.~Schoenberg, editors,
  {\em Studies in {S}pline {F}unctions and {A}pproximation {T}heory}, pages
  251--276. Academic Press, New York, 1976.

\bibitem{schu}
L.~L. Schumaker.
\newblock {\em Spline Functions: Basic Theory}.
\newblock Krieger Publishing Company, 1993.

\bibitem{spaeth}
H.~Spaeth.
\newblock Exponential spline interpolation.
\newblock {\em Computing}, 4:225--233, 1969.

\bibitem{sb}
J.~Stoer and R.~Bulirsch.
\newblock {\em Introduction to Numerical Analysis}.
\newblock Springer Verlag, New York, 2nd edition, 1993.

\bibitem{ub}
M.~Unser and T.~Blu.
\newblock Fractional {S}plines and {Wavelets}.
\newblock {\em {SIAM} {R}eview}, 42(1):43--67, 2000.

\bibitem{unserblu05}
M.~Unser and T.~Blu.
\newblock Cardinal exponential splines: {P}art {I} -- theory and filtering
  algorithms.
\newblock {\em {IEEE} Trans. Signal Processing}, 53(4):1425--1438, 2005.

\bibitem{zemanian}
A.~H. Zemanian.
\newblock {\em Distribution {T}heory and {T}ransform {A}nalysis -- {A}n
  {I}ntroduction to {G}eneralized {F}unctions, with {A}pplications}.
\newblock Dover Publications, Inc., New York, 1987.

\bibitem{zoppou}
C.~Zoppou, S.~Roberts, and R.~J. Renka.
\newblock Exponential spline interpolation in characteristic based scheme for
  solving the advective--diffusion equation.
\newblock {\em Int. J. Numer. Meth. Fluids}, 33:429--452, 2000.

\end{thebibliography}

\end{document}